\DeclareMathOperator{\Vol}{Vol}
\DeclareMathOperator{\Ric}{Ric}
\numberwithin{equation}{section}
\crefname{theorem}{Theorem}{Theorems}
\crefname{lemma}{Lemma}{Lemmas}
\crefname{proposition}{Proposition}{Propositions}
\crefname{corollary}{Corollary}{Corollaries}
\crefname{claim}{Claim}{Claims}
\crefname{assumption}{Assumption}{Assumptions}
\crefname{definition}{Definition}{Definitions}
\crefname{remark}{Remark}{Remarks}
\crefname{example}{Example}{Examples}
\crefname{equation}{}{}
\crefname{section}{Section}{Sections}
\crefname{subsection}{Section}{Sections}
\crefname{appendix}{Appendix}{Appendices}
\crefname{condition}{}{}
\theoremstyle{plain}
\newtheorem{theorem}{Theorem}[section]
\newtheorem{lemma}[theorem]{Lemma}
\newtheorem{assumption}[theorem]{Assumption}
\theoremstyle{definition}
\theoremstyle{remark}
\subjclass[2010]{32T15 (primary), 32V05 (secondary)}  
\keywords{strictly pseudoconvex domain; renormalized volume; $Q$-prime curvature; K\"{a}hler-Einstein metric} 
	\def\MR#1{}
\begin{document}
\title[On the renormalized volume of tubes]{On the renormalized volume of tubes over polarized K\"{a}hler-Einstein manifolds}
\author{Yuya Takeuchi}
\address{Graduate School of Mathematical Sciences \\ The University of Tokyo \\ 3-8-1 Komaba, Meguro, Tokyo 153-8914 Japan}
\email{ytake@ms.u-tokyo.ac.jp}

\begin{abstract}
	A formula of the renormalized volume of tubes
	over polalized K\"{a}hler-Einstein manifolds
	is given in terms of the Einstein constant and the volume of the polarization.
\end{abstract}

\maketitle

\section{Introduction} \label{section:introduction}

The renormalized volume
of conformally compact Einstein manifolds
was first introduced by Henningson-Skenderis~\cite{Henningson-Skenderis98}
and Graham~\cite{Graham00}
in studies of the AdS/CFT correspondence.
Let $M$ be an $(m+1)$-dimensional compact manifold with boundary $\partial M$,
and $x$ a defining function of the boundary.
A Riemannian metric $g_{+}$ on the interior of $M$
is called a conformally compact Einstein metric
if $x^{2} g_{+}$ extends to a Riemannian metric on $M$
and $\Ric_{g_{+}} = - m g_{+}$ holds.
The conformal class $C = [(x^{2} g_{+})|_{\partial M}]$ on $\partial M$
is independent of the choice of $x$,
called the conformal infinity of $g_{+}$.
If we choose a representative $g$ of $C$,
there exists a unique defining function $x$ near the boundary
such that $|d x|_{g_{+}} =1$ and $(x^{2} g_{+})|_{\partial M} = g$.
Note that this $x$ changes with a representative $g \in C$.
For such a defining function,
consider the relatively compact domain $M^{\epsilon} = \{ x < - \epsilon \}$ for $\epsilon > 0$.
The volume $\Vol_{g_{+}}(M^{\epsilon})$ of $M^{\epsilon}$
has the following expansion, as $\epsilon \to + 0$:
\begin{equation}
	\Vol_{g_{+}}(M^{\epsilon})
	= \sum_{j = 0}^{\lceil m/2 \rceil -1} a_{j} \, \epsilon^{2 j - m}
	+
	\begin{cases}
		V^{o} + O(1), & m \ \text{odd}, \\
		L \log \epsilon + V^{e} + O(1), & m \ \text{even}.
	\end{cases}
\end{equation}
The constant term $V^{o}$ or $V^{e}$ in this asymptotics is called the renormalized volume.
Graham~\cite{Graham00}
has proved that $V^{e}$ and $L$ are independent of the choice of $g$.
Moreover,
Graham and Zworski~\cite{Graham-Zworski03} have shown that $L$
coincides with a constant multiple of the total $Q$-curvature of the boundary,
a global conformal invariant;
see also~\cite{Fefferman-Graham02}.
Fefferman and Graham~\cite{Fefferman-Graham02}
also have deduced a formula for $V^{o}$ in terms of the scattering matrix for $(M, g_{+})$.
A similar formula for $V^{e}$ has been proved by
Yang-Chang-Qing~\cite{Yang-Chang-Qing08}.

Now we consider the renormalized volume of strictly pseudoconvex domains.
Let $\Omega$ be a bounded strictly pseudoconvex domain
in an $(n+1)$-dimensional complex manifold $X$.
Assume that
$\Omega$ has a complete K\"{a}hler-Einstein metric $\omega_{+}$ with Einstein constant $- (n+2)$.
We also suppose that $K_{X}$ has a Hermitian metric
that is flat on the pseudoconvex side near the boundary.
Then
there exists a defining function $\rho$
such that $\omega_{+}$ coincides with $- d d^{c} \log (- \rho)$ near the boundary,
where $d^{c} = (\sqrt{-1}/2) (\overline{\partial} - \partial)$;
see \cref{lem:existence-of-potential}.
Note that such a defining function $\rho$ is not uniquely determined by $\omega_{+}$.
For this $\rho$,
the volume $\Vol_{\omega_{+}}(\Omega^{\epsilon})$
of the relatively compact domain $\Omega^{\epsilon} = \{ \rho < - \epsilon \}$
has an expansion of the form
\begin{equation}
	\Vol_{\omega_{+}}(\Omega^{\epsilon})
	= \sum_{j = 0}^{n} b_{j} \, \epsilon^{j - n - 1} + V + O(1),
\end{equation}
as $\epsilon \to + 0$.
We call the constant term $V$ the \emph{renormalized volume} of $(\Omega, \omega_{+})$.
Note that Seshadri~\cite{Seshadri07} has considered
the renormalized volume for the choice of defining functions similar to the conformal case.

In this paper,
we compute the renormalized volume
of tubes over polarized K\"{a}hler-Einstein manifolds.
Let $Y$ be a closed K\"{a}hler manifold
and $L$ an ample line bundle over $Y$.
Then the pair $(Y, L)$ is called a \emph{polarized K\"{a}hler manifold}.
A \emph{polarized K\"{a}hler-Einstein manifold with Einstein constant $\beta$}
is a triple $(Y, L, h_{L})$ consisting of a polarized K\"{a}hler manifold $(Y, L)$
and a Hermitian metric $h_{L}$ of $L$
such that the curvature form $\omega = \sqrt{-1} \Theta_{h_{L}}$
defines a K\"{a}hler-Einstein metric on $Y$ with Einstein constant $\beta$.

\begin{theorem} \label{thm:renormalized-volume-for-tubes}
	Let $(Y, L, h_{L})$ be an $n$-dimensional polarized K\"{a}hler-Einstein manifold with Einstein constant $\beta$.
	If $\beta < 1$,
	there exists a complete K\"{a}hler-Einstein metric $\omega_{+}$
	on $\Omega = \{ v \in L^{-1} \mid h_{L^{-1}}(v, v) < 1\}$ with Einstein constant $- (n+2)$,
	where $h_{L^{-1}}$ is the dual metric of $h_{L}$,
	and the renormalized volume $V$ of $(\Omega, \omega_{+})$ is given by
	\begin{equation}
		V = (2 \pi)^{n+1} \left[ \frac{1}{(n+1)!} \left(- \frac{\beta}{n+1} \right)^{n+1}
		- \left(\frac{1 - \beta}{n+2} \right)^{n+1} \right] \Vol(L).
	\end{equation}
	Here $\Vol(L) = \int_{Y} c_{1}(L)^{n}$ is the volume of $L$.
\end{theorem}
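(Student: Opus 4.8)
\emph{Proof strategy.}
The configuration is invariant under the fibrewise circle action on $\Omega$ and under the isometry group of $(Y,\omega)$, so one expects $\omega_{+}$ to be governed by a single function of the radial variable $s := \log h_{L^{-1}}(v,v)$, which ranges over $(-\infty,0)$ on $\Omega$ and degenerates to $-\infty$ along the zero section and to $0$ at $\partial\Omega$. The plan is to (i) make a rotationally symmetric ansatz and reduce the Einstein equation to an ODE, (ii) solve it and read off the boundary conditions that single out the complete solution, which is where $\beta<1$ enters, (iii) identify the defining function $\rho$ of \cref{lem:existence-of-potential} for this $\omega_{+}$, and (iv) compute $\Vol_{\omega_{+}}(\Omega^{\epsilon})$ by fibre integration and extract the constant term. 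Away from the zero section $\sqrt{-1}\,\partial\bar\partial s = \pi^{*}\omega$, so for $\psi=\psi(s)$ one gets $\sqrt{-1}\,\partial\bar\partial\psi(s) = \psi''(s)\,\sqrt{-1}\,\partial s\wedge\bar\partial s + \psi'(s)\,\pi^{*}\omega$, whose determinant in a local holomorphic frame equals $\psi''\,(\psi')^{n}\det(\omega)\,/\,|w|^{2}$ up to a pluriharmonic factor, $w$ being a fibre coordinate.

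Imposing $\Ric(\omega_{+}) = -(n+2)\,\omega_{+}$ on $\omega_{+} = \sqrt{-1}\,\partial\bar\partial\psi(s)$ and using $\Ric(\omega) = \beta\omega$ collapses the two components of the equation (along $\sqrt{-1}\,\partial s\wedge\bar\partial s$ and along $\pi^{*}\omega$) to the single relation $\frac{d}{ds}\log\!\bigl(\psi''\,(\psi')^{n}\bigr) = \beta + (n+2)\psi'$. Writing $p = \psi'$ and treating $q = \psi''$ as a function of $p$ turns this into the linear first-order ODE $\frac{dq}{dp} + \frac{n}{p}\,q = \beta + (n+2)p$, with general solution $q = p^{2} + \frac{\beta}{n+1}\,p + C\,p^{-n}$; integrating once also gives $\log\!\bigl(\psi''(\psi')^{n}\bigr) = \beta s + (n+2)\psi(s) + \mathrm{const}$, which will be used in step (iii).

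For $\omega_{+}$ to extend smoothly across the zero section one needs $q(s) = O(e^{s})$ with positive leading coefficient as $s\to-\infty$; this forces $p\to p_{0}$ with $q(p_{0}) = 0$ and $q'(p_{0}) = 1$, two equations that determine $C$ and give $p_{0} = \frac{1-\beta}{n+2}$, positive precisely when $\beta<1$. For such $\beta$ one checks that $q>0$ on $(p_{0},\infty)$, so the resulting $\psi$ (unique up to an additive constant) gives a smooth Kähler metric extending across the zero section, and that $p\to\infty$ as $s\to0^{-}$, so the boundary is at infinite distance; this proves the existence statement. For the defining function one uses that $K_{X}\cong\pi^{*}(K_{Y}\otimes L)$ carries a flat metric near $\partial\Omega$ equal to $e^{(1-\beta)s}$ times the natural metric; comparing with Fefferman's normalization of the complex Monge--Ampère equation and invoking the relation of step (ii), one finds that the $\rho$ of \cref{lem:existence-of-potential} is $-e^{-\psi(s)}$ up to a positive constant multiple, and rescaling $\rho$ by a constant does not change $V$.

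Finally, with this $\rho$ one has $\Omega^{\epsilon} = \{\,s < s_{\epsilon}\,\}$ where $\psi(s_{\epsilon}) = -\log\epsilon$. Integrating $\omega_{+}^{n+1}/(n+1)!$ first over the circle fibres --- where $\sqrt{-1}\,\partial s\wedge\bar\partial s\wedge(\pi^{*}\omega)^{n}$ becomes $|w|^{-2}\,\sqrt{-1}\,dw\wedge d\bar w\wedge(\pi^{*}\omega)^{n}$ --- and then over $Y$, using $\int_{Y}\omega^{n} = (2\pi)^{n}\Vol(L)$ and the substitution $q\,ds = dp$, expresses $\Vol_{\omega_{+}}(\Omega^{\epsilon})$ as an explicit constant times $\Vol(L)\,\bigl(p(s_{\epsilon})^{n+1} - p_{0}^{\,n+1}\bigr)$. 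Since $\frac{dp}{d\psi} = q/p = p + \frac{\beta}{n+1} + C p^{-n-1}$, one gets $p(s_{\epsilon}) = K\epsilon^{-1} - \frac{\beta}{n+1} + O(\epsilon^{n+1})$ for a constant $K$, so the constant term of $p(s_{\epsilon})^{n+1}$ is $\bigl(-\frac{\beta}{n+1}\bigr)^{n+1}$; assembling the pieces gives the stated formula, the boundary contributing $\bigl(-\frac{\beta}{n+1}\bigr)^{n+1}$ and the zero section contributing $-\bigl(\frac{1-\beta}{n+2}\bigr)^{n+1}$. The main obstacle is the combination of step (iii) with the bookkeeping in step (iv): one must pin down $\rho$ precisely, hence the correspondence between $\epsilon$ and the level $s_{\epsilon}$, and then keep track of all the factors of $2\pi$, of $(n+1)!$, and of the determinant normalization so that the two contributions emerge with exactly the coefficients in the theorem; verifying that the ODE solution really yields a smooth complete metric across the zero section is a secondary point.
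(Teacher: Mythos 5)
Your route is genuinely different from the paper's: the paper never touches the Einstein equation directly, but combines the decomposition of $V$ in \cref{thm:decomposition-of-renormalized-volume} with the total $Q$-prime curvature of Sasakian $\eta$-Einstein manifolds from \cite{Takeuchi}, and reduces the remaining work to the Chern-number localization of \cref{thm:computation-of-Chern-numbers} via \cref{lem:flat-metric-on-tubes}. Your Calabi-ansatz reduction is sound as far as it goes: the ODE $\frac{d}{ds}\log\bigl(\psi''(\psi')^{n}\bigr)=\beta+(n+2)\psi'$, the solution $q=p^{2}+\tfrac{\beta}{n+1}p+Cp^{-n}$, the closing conditions $q(p_{0})=0$, $q'(p_{0})=1$ giving $p_{0}=\tfrac{1-\beta}{n+2}$ (hence the role of $\beta<1$), and the boundary expansion $p(s_{\epsilon})=K\epsilon^{-1}-\tfrac{\beta}{n+1}+O(\epsilon^{n+1})$ are all correct.

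The gap is in the final assembly, which is exactly the bookkeeping you defer. Your step (iv) writes $\Vol_{\omega_{+}}(\Omega^{\epsilon})$ as a single constant times $\Vol(L)\bigl(p(s_{\epsilon})^{n+1}-p_{0}^{n+1}\bigr)$; carrying out the fibre integration, that constant is $(2\pi)^{n+1}/(n+1)!$ (this normalization checks out against the unit ball), so the constant term your scheme produces is
\begin{equation}
	\frac{(2\pi)^{n+1}}{(n+1)!}\left[\left(-\frac{\beta}{n+1}\right)^{n+1}-\left(\frac{1-\beta}{n+2}\right)^{n+1}\right]\Vol(L),
\end{equation}
in which the two contributions necessarily carry the \emph{same} prefactor, being the two endpoint evaluations of one integral $\int p^{n}\,dp$. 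The formula in the theorem is not of this shape: $1/(n+1)!$ multiplies only the $(-\beta/(n+1))^{n+1}$ term, while the zero-section term appears without it, since in the paper it arises as $(2\pi/(n+2))^{n+1}\int_{\Omega}\widetilde{c}_{1}(K_{\Omega})^{n+1}$ computed by \cref{thm:computation-of-Chern-numbers}. So the claim that ``assembling the pieces gives the stated formula'' is unjustified: as structured, your computation cannot produce the asymmetric factors, and the discrepancy is precisely a factor $(n+1)!$ on the zero-section contribution. To make the direct approach work you must either locate where such a factor enters (your outline provides no mechanism for it), or confront the normalization issue you glossed over in step (iii): your $\rho=-e^{-\psi}$ is only one of many defining functions with $\omega_{+}=-dd^{c}\log(-\rho)$ near the boundary (any two differ by a factor $e^{u}$ with $u$ pluriharmonic, not merely by a constant), and you nowhere verify that the constant term in the volume expansion is insensitive to this choice or that your choice matches the normalization implicit in \cref{thm:decomposition-of-renormalized-volume}; without that, the comparison with the paper's $V$ is not pinned down, and with it, your present bookkeeping contradicts the stated result rather than proving it.
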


Note that the assumption $\beta < 1$
is a necessary and sufficient condition
for the existence of a complete K\"{a}hler-Einstein metric on $\Omega$
with negative Einstein constant,
which has been proved by van Coevering~\cite{vanCoevering12}.

To prove \cref{thm:renormalized-volume-for-tubes},
we apply the formula of the renormalized volume
obtained by Hirachi-Marugame-Matsumoto~\cite{Hirachi-Marugame-Matsumoto17}.
Since $K_{X}$ has a Hermitian metric that is flat on the pseudoconvex side,
there exists a lift $\widetilde{c}_{1}(K_{\Omega}) \in H^{2}_{c}(\Omega; \mathbb{R})$
of the first Chern class $c_{1}(K_{\Omega}) \in H^{2}(\Omega; \mathbb{R})$.

\begin{theorem}[{\cite[Theorem 1.1]{Hirachi-Marugame-Matsumoto17}}]
\label{thm:decomposition-of-renormalized-volume}
	The renormalized volume $V$ of $(\Omega, \omega_{+})$ is given by
	\begin{equation}
		V
		= \frac{(-1)^{n+1}}{2 (n!)^{2} (n+1)!} \overline{Q}'(\partial \Omega)
		+ \left(\frac{2 \pi}{n+2} \right)^{n+1} \int_{\Omega} \widetilde{c}_{1}(K_{\Omega})^{n+1}.
	\end{equation}
	Here $\overline{Q}'(\partial \Omega)$ is
	the \emph{total $Q$-prime curvature} of $\partial \Omega$;
	see~\cite[Proposition 5.5]{Hirachi14} for the definition.
\end{theorem}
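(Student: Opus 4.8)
The plan is to express the renormalized volume as a regularized integral of the top power of the first Chern form of the K\"{a}hler-Einstein metric, and then to split that integral into a compactly supported topological pairing and a boundary transgression, the finite part of which is the total $Q$-prime curvature.

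First I would exploit the Einstein condition. Since $\Ric_{\omega_{+}} = -(n+2)\omega_{+}$, the Chern-Weil representative $\gamma := -(2\pi)^{-1}\Ric_{\omega_{+}}$ of $c_{1}(K_{\Omega})$ equals $\tfrac{n+2}{2\pi}\omega_{+}$, so the K\"{a}hler-Einstein volume form is a fixed universal constant multiple of $\gamma^{n+1}$. Consequently, up to that constant, $\Vol_{\omega_{+}}(\Omega^{\epsilon})$ is the regularized integral $\int_{\Omega^{\epsilon}}\gamma^{n+1}$. The form $\gamma$ is smooth and closed on $\Omega$ but does \emph{not} have compact support, so this integral diverges as $\epsilon \to +0$, its constant term being $V$.

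Next I would compare $\gamma$ with the compactly supported lift. Both $\gamma$ and a smooth representative of $\widetilde{c}_{1}(K_{\Omega})$ represent $c_{1}(K_{\Omega})$ in $H^{2}(\Omega;\mathbb{R})$, hence differ by an exact form $d\tau$. The telescoping identity
\begin{equation}
	\gamma^{n+1} - \widetilde{c}_{1}(K_{\Omega})^{n+1}
	= d\!\left(\tau \wedge \sum_{i=0}^{n} \gamma^{i} \wedge \widetilde{c}_{1}(K_{\Omega})^{n-i}\right)
\end{equation}
together with Stokes' theorem on $\Omega^{\epsilon}$ gives
\begin{equation}
	\int_{\Omega^{\epsilon}} \gamma^{n+1}
	= \int_{\Omega^{\epsilon}} \widetilde{c}_{1}(K_{\Omega})^{n+1}
	+ \int_{\{\rho = -\epsilon\}} \tau \wedge \sum_{i=0}^{n} \gamma^{i} \wedge \widetilde{c}_{1}(K_{\Omega})^{n-i}.
\end{equation}
As $\epsilon \to +0$ the first term on the right stabilizes to the topological number $\int_{\Omega}\widetilde{c}_{1}(K_{\Omega})^{n+1}$ as soon as the compact support of $\widetilde{c}_{1}(K_{\Omega})$ lies inside $\Omega^{\epsilon}$, while the boundary integral absorbs all of the divergence and contributes a finite part. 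Matching constant terms then splits $V$ into a universal multiple of $\int_{\Omega}\widetilde{c}_{1}(K_{\Omega})^{n+1}$ and a renormalized boundary transgression.

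The main obstacle is to identify the finite part of the boundary transgression with a universal multiple of the total $Q$-prime curvature $\overline{Q}'(\partial\Omega)$. For this I would pass to the Fefferman defining function and insert the asymptotic expansion of the Cheng-Yau solution of the complex Monge-Amp\`{e}re equation $J(\rho) = 1$ near $\partial\Omega$; the forms $\tau$ and $\gamma$ then expand in powers of $\rho$ with coefficients built from CR-invariant boundary data, so that integrating over $\{\rho = -\epsilon\}$ and extracting the $\epsilon$-independent term reproduces Hirachi's local expression for the $Q$-prime curvature. The would-be logarithmic anomaly, governed by the total $Q$-curvature, vanishes by CR pluriharmonicity of the Fefferman contact form, which is precisely why $\overline{Q}'$ appears in the constant term rather than as a $\log\epsilon$ coefficient. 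The remaining universal constants, including the factor $(-1)^{n+1}/\!\left(2(n!)^{2}(n+1)!\right)$ and the normalization of the coefficient $\left(\tfrac{2\pi}{n+2}\right)^{n+1}$ of the Chern term, are then pinned down by this local computation together with the relation $\gamma = \tfrac{n+2}{2\pi}\omega_{+}$.
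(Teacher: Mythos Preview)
This theorem is not proved in the present paper at all: it is quoted verbatim as \cite[Theorem~1.1]{Hirachi-Marugame-Matsumoto17} and then \emph{applied} as a black box to compute the renormalized volume of tubes. There is no ``paper's own proof'' to compare your proposal against.

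That said, your sketch is a plausible outline of how the cited result is actually established in \cite{Hirachi-Marugame-Matsumoto17}: one rewrites the volume form via the Einstein relation $\gamma=\tfrac{n+2}{2\pi}\omega_{+}$, splits off the compactly supported top Chern power by a transgression and Stokes, and then identifies the finite part of the remaining boundary integral with $\overline{Q}'(\partial\Omega)$ using the Fefferman defining function and the Monge--Amp\`ere asymptotics. The genuine work, which your sketch acknowledges but does not carry out, is precisely that last identification: showing that the constant term of the boundary transgression is the local $Q$-prime density (with the correct universal constant) requires the detailed CR-invariant expansion developed in \cite{Hirachi14} and \cite{Hirachi-Marugame-Matsumoto17}, and is not a routine bookkeeping exercise. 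So your proposal is a correct high-level strategy, but as written it is an outline rather than a proof; in any case, for the purposes of this paper no proof is required beyond the citation.
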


In the setting of \cref{thm:renormalized-volume-for-tubes},
the boundary $\partial \Omega$
is a Sasakian $\eta$-Einstein manifold,
and the total $Q$-prime curvature for such CR manifolds
has been computed by the author~\cite{Takeuchi} and Case-Gover~\cite{Case-Gover17}.
The result is
\begin{equation}
	\overline{Q}'(\partial \Omega)
	= 2 (n !)^{2} \left(\frac{2 \pi \beta}{n+1} \right)^{n+1} \Vol(L).
\end{equation}
Hence
it is enough to compute the integral of $\widetilde{c}_{1}(K_{\Omega})^{n+1}$.
Here,
we will give a little bit more general statement.
We consider a general bounded strictly pseudoconvex domain $\Omega$,
and make the following assumption:

\begin{assumption} \label{assumption:flat-Hermitian-metric}
	The maximal compact analytic set of $\Omega$
	is a smooth complex hypersurface $D$,
	and there exists a Hermitian metric $h_{\Omega}$ of $K_{\Omega}$
	such that the support of its curvature
	is contained in a tubular neighborhood of $D$.
\end{assumption}

Namely,
we assume that
the cohomology class $\widetilde{c}_{1}(K_{\Omega})$ is localized along a smooth divisor.
Then the quantity $\int_{\Omega} \widetilde{c}_{1}(K_{\Omega})^{n+1}$
can be computed as follows.

\begin{theorem} \label{thm:computation-of-Chern-numbers}
	Under \cref{assumption:flat-Hermitian-metric},
	the normal bundle $N_{D / \Omega}$ of $D$ satisfies
	$c_{1}(K_{D}) = \beta \cdot c_{1}(N_{D / \Omega})$
	in $H^{2}(D; \mathbb{R})$ for some $\beta \in \mathbb{R}$,
	and the quantity $\int_{\Omega} \widetilde{c}_{1}(K_{\Omega})^{n+1}$ is given by
	\begin{equation}
		\int_{\Omega} \widetilde{c}_{1}(K_{\Omega})^{n+1}
		= (\beta - 1)^{n+1} \int_{D} c_{1}(N_{D / \Omega})^{n},
	\end{equation}
	Additionally,
	assume that $\Omega$ has a K\"{a}hler-Einstein metric with negative Einstein constant.
	Then $N_{D / \Omega}$ is negative and $\beta < 1$ holds.
	In particular,
	$\int_{\Omega} \widetilde{c}_{1}(K_{\Omega})^{n+1}$ is negative.
\end{theorem}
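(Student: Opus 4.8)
The plan is to reduce everything to a tubular neighborhood of $D$, where the Thom isomorphism together with the adjunction formula does the work. Fix a tubular neighborhood $U$ of $D$ that is relatively compact in $\Omega$, diffeomorphic to an open disk subbundle of the normal bundle $N = N_{D/\Omega}$, and large enough to contain the support of the curvature of $h_{\Omega}$. Built from that curvature there is a closed real $2$-form $\gamma$ with compact support in $U$ representing $c_{1}(K_{\Omega})$, whose class in $H^{2}_{c}(\Omega;\mathbb{R})$ is the lift $\widetilde{c}_{1}(K_{\Omega})$. Assume first that $D$ is connected (otherwise argue component by component). By the Thom isomorphism, $H^{2}_{c}(U;\mathbb{R}) \cong H^{0}(D;\mathbb{R}) = \mathbb{R}$, generated by the Thom class $\tau$ of the zero section $\iota\colon D \hookrightarrow U$; recall that $\iota^{*}\tau = c_{1}(N)$ and that $\int_{U}\tau\wedge\alpha = \int_{D}\iota^{*}\alpha$ for every closed form $\alpha$ on $U$. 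Hence $[\gamma] = (\beta-1)\tau$ in $H^{2}_{c}(U;\mathbb{R})$ for a unique real number, which I name $\beta-1$, and restricting to $D$ gives $c_{1}(K_{\Omega}|_{D}) = (\beta-1)c_{1}(N)$. The adjunction formula $K_{D} \cong K_{\Omega}|_{D}\otimes N$ then yields $c_{1}(K_{D}) = c_{1}(K_{\Omega}|_{D}) + c_{1}(N) = \beta\, c_{1}(N)$, which is the first assertion.

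For the integral, $\int_{\Omega}\widetilde{c}_{1}(K_{\Omega})^{n+1} = \int_{U}\gamma^{n+1}$ because $\gamma$ is supported in $U$. Write $\gamma = (\beta-1)\tau + d\eta$ with $\eta$ a compactly supported $1$-form on $U$; in the expansion of $\gamma^{n+1}$, every term carrying a factor $d\eta$ is $d$ of a compactly supported form (using that $\tau$ and $d\eta$ are closed and $(d\eta)^{j} = d(\eta\wedge(d\eta)^{j-1})$), hence integrates to zero by Stokes, so $\int_{U}\gamma^{n+1} = (\beta-1)^{n+1}\int_{U}\tau^{n+1}$. Taking $\alpha = \tau^{n}$ in the defining property of $\tau$ gives $\int_{U}\tau^{n+1} = \int_{D}(\iota^{*}\tau)^{n} = \int_{D}c_{1}(N)^{n}$, and the displayed formula follows.

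For the last part, suppose $\Omega$ carries a complete Kähler-Einstein metric $\omega_{+}$ with negative Einstein constant, normalized so that $\Ric_{\omega_{+}} = -(n+2)\omega_{+}$. First, $N$ is negative: being strictly pseudoconvex, $\Omega$ is holomorphically convex, so its Remmert reduction maps $\Omega$ onto a Stein space and must contract the compact hypersurface $D$ (a Stein space has no positive-dimensional compact analytic subset); thus $D$ is an exceptional divisor, and Grauert's contractibility criterion gives that $N$ is negative (this is also contained in van Coevering's analysis). Hence $-c_{1}(N)$ is a Kähler class on $D$. On the other hand $\Ric_{\omega_{+}} = -(n+2)\omega_{+}$ represents $2\pi c_{1}(\Omega) = -2\pi c_{1}(K_{\Omega})$, so $c_{1}(K_{\Omega})$ is represented by the positive form $((n+2)/2\pi)\,\omega_{+}$; restricting to the compact submanifold $D$ shows that $c_{1}(K_{\Omega}|_{D}) = (\beta-1)c_{1}(N) = (1-\beta)\bigl(-c_{1}(N)\bigr)$ is a Kähler class. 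Since the Kähler cone is a pointed convex cone, $1-\beta > 0$, i.e. $\beta < 1$. Then $(\beta-1)^{n+1}$ has sign $(-1)^{n+1}$ while $\int_{D}c_{1}(N)^{n} = (-1)^{n}\int_{D}(-c_{1}(N))^{n}$ has sign $(-1)^{n}$, so $\int_{\Omega}\widetilde{c}_{1}(K_{\Omega})^{n+1}$, being their product, is negative.

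The step I expect to be the genuine obstacle is the negativity of $N_{D/\Omega}$: the cohomological bookkeeping above is routine once the compact-support Thom isomorphism is set up correctly (the only subtle points are orientation conventions and, for disconnected $D$, the a priori possibility of different proportionality constants on different components), whereas the negativity of the normal bundle is a real input from the geometry of strongly pseudoconvex manifolds and is what ultimately forces $\beta < 1$.
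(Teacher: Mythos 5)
Your cohomological computation of the first half is correct and takes a genuinely different route from the paper's. You identify the compactly supported class of $\gamma$ with a multiple of the Thom class via $H^{2}_{c}(U;\mathbb{R})\cong H^{0}(D;\mathbb{R})$, read off the proportionality $c_{1}(K_{\Omega}|_{D})=(\beta-1)c_{1}(N_{D/\Omega})$ (hence $c_{1}(K_{D})=\beta\,c_{1}(N_{D/\Omega})$ by adjunction), and get the integral from $\int_{U}\tau^{n+1}=\int_{D}c_{1}(N_{D/\Omega})^{n}$. The paper instead chooses an $S^{1}$-invariant primitive $\mu$ of $\Pi-p^{*}\omega$ on the disk bundle, observes that $\mu(\xi)$ is a constant $\alpha$ near the boundary circle bundle, interprets $\alpha^{-1}\mu|_{S}$ as a connection form, and does the Stokes/fiber-integration computation by hand, with a separate case $\alpha=0$. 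Your argument avoids the case split and the explicit connection, at the cost of invoking the compactly supported Thom isomorphism and Poincar\'e duality of the zero section; both are sound, and your caveats about orientations and disconnected $D$ are the right ones.

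The gap is exactly at the step you flag as the crux: ``$D$ is an exceptional divisor, and Grauert's contractibility criterion gives that $N$ is negative.'' Grauert's criterion is the implication \emph{negative normal bundle $\Rightarrow$ exceptional}; the converse is false in general for divisors of dimension $\ge 2$ (negativity of $N_{D/\Omega}$ is not necessary for blow-down; the clean equivalence via negativity of the intersection form is special to curves on surfaces), and the parenthetical appeal to van Coevering is not a substitute for an argument. What the paper actually proves is weaker and suffices: \cref{lem:normal-bundle-of-exceptional-set} shows only that $N_{D/\Omega}$ \emph{cannot be positive} --- if it were, Schneider's construction gives a strictly plurisubharmonic function on a punctured neighborhood of $D$ blowing up along $D$, which, pushed through the Remmert blow-down $\phi$ and extended across the point $\phi(D)$ by Grauert--Remmert, yields a contradiction. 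Negativity then follows by combining this with the proportionality you established in the first half: since the Kähler--Einstein metric makes $c_{1}(K_{\Omega}|_{D})$ a Kähler class and $c_{1}(K_{\Omega}|_{D})=(\beta-1)c_{1}(N_{D/\Omega})\neq 0$, the class $c_{1}(N_{D/\Omega})$ is a nonzero real multiple of a Kähler class, hence $N_{D/\Omega}$ is either positive or negative, and the lemma excludes positivity. Your sketch of the last part never uses this proportionality, which is precisely what is needed to upgrade ``not positive'' to ``negative.'' Once negativity is supplied by this two-step argument, your deduction of $\beta<1$ and of the sign of $\int_{\Omega}\widetilde{c}_{1}(K_{\Omega})^{n+1}$ goes through as written.
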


Note that \cref{thm:computation-of-Chern-numbers} can be generalized to the case
that the maximal compact analytic set consists of disjoint smooth complex hypersurfaces.
We also remark that
the constant $\beta$ is a rational number
since $c_{1}(K_{D})$ and $c_{1}(N_{D / \Omega})$ are elements of $H^{2}(D; \mathbb{Z})$.
In particular,
$\int_{\Omega} \widetilde{c}_{1}(K_{\Omega})^{n+1}$ is also a rational number.

This paper is organized as follows.
In Section 2,
we recall fundamental facts for strictly pseudoconvex domains.
Section 3 provides the proof of \cref{thm:computation-of-Chern-numbers}.
In Section 4,
we prove \cref{thm:renormalized-volume-for-tubes}.

\section*{Acknowledgements}

The author is grateful to his supervisor Professor Kengo Hirachi
for various helpful comments.
He also thank Professor Paul Yang for introducing him to this work.
A part of this work was carried out during his visit to Princeton University
with the support from The University of Tokyo/Princeton University
Strategic Partnership Teaching and Research Collaboration Grant,
and from the Program for Leading Graduate Schools, MEXT, Japan.
This work was also supported by JSPS Research Fellowship for Young Scientists
and KAKENHI Grant Number 16J04653.

\section{Strictly pseudoconvex domains}

Let $\Omega$ be a bounded strictly pseudoconvex domain
in an $(n+1)$-dimensional complex manifold $X$.
If $X$ is the complex Euclidean space,
$\Omega$ has a complete K\"{a}hler-Einstein metric
with negative Einstein constant~\cite[Corollary 4.5]{Cheng-Yau80}.
For a general $X$,
a necessary and sufficient condition
for the existence of such a metric is obtained by van Coevering.

\begin{theorem}[{\cite[Theorem 3.1 and Proposition 3.2]{vanCoevering12}}]
\label{thm:existence-of-KE-metric}
	The domain $\Omega$ has a complete K\"{a}hler-Einstein metric $\omega_{+}$
	with Einstein constant $- (n + 2)$
	if and only if $K_{\Omega}$ has a Hermitian metric with positive curvature.
	Moreover,
	such a K\"{a}hler-Einstein metric is unique and invariant under biholomorphisms.
\end{theorem}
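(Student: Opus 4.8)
The plan is to prove the two implications separately and then deduce uniqueness and biholomorphic invariance from a comparison argument.

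First, the necessity of the curvature condition is a pointwise computation requiring no analysis. Suppose $\omega_{+}$ is a complete K\"{a}hler--Einstein metric with $\Ric_{\omega_{+}} = -(n+2)\omega_{+}$. In a local holomorphic frame the volume form $\omega_{+}^{n+1}$ has density $\det(g_{i\overline{j}})$, and the reciprocal of this density transforms as the local weight of a Hermitian metric $h_{+}$ on $K_{\Omega}$. Its curvature form is $\sqrt{-1}\,\partial\overline{\partial}\log\det(g_{i\overline{j}}) = -\Ric_{\omega_{+}} = (n+2)\omega_{+}$, which is positive. Thus the canonical volume form of $\omega_{+}$ directly produces the desired positively curved Hermitian metric on $K_{\Omega}$.

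Second, the substantive direction: assume $K_{\Omega}$ carries a Hermitian metric $h$ whose curvature $\omega_{0} = \sqrt{-1}\,\Theta_{h}$ is positive, so that $\omega_{0}$ is a K\"{a}hler form on $\Omega$. I would reformulate the Einstein equation as a complex Monge--Amp\`{e}re equation: writing the sought metric as $\omega_{+} = (n+2)^{-1}\omega_{0} + \sqrt{-1}\,\partial\overline{\partial}u$, the condition $\Ric_{\omega_{+}} = -(n+2)\omega_{+}$ becomes, after comparing volume forms, an equation of the schematic form $(\omega_{0} + (n+2)\sqrt{-1}\,\partial\overline{\partial}u)^{n+1} = e^{(n+2)u + F}\,\mu_{h}$ for an explicit smooth $F$, where $\mu_{h}$ is the volume form attached to $h$. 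Completeness of $\omega_{+}$ is encoded by requiring $u \to +\infty$ at $\partial\Omega$ with the profile $u \sim -\log(-\rho)$ dictated by a defining function $\rho$. I would then solve this boundary-blow-up problem by the method of Cheng--Yau: construct sub- and super-solutions from the strictly pseudoconvex boundary (using local defining functions as barriers), establish the $C^{0}$, second-order (Laplacian), and higher-order a priori estimates, and run the continuity method. The positivity of $\omega_{0}$ supplies ellipticity and the correct sign of the zeroth-order term $(n+2)u$, which is exactly what makes the maximum principle close. The expected main obstacle is precisely these estimates near the boundary, in particular the second-order estimate up to $\partial\Omega$ and the control of the singular boundary profile, carried out in a general complex manifold rather than a Euclidean domain; here the nontriviality of $K_{\Omega}$ forces one to work with the reference metric $h$ rather than a global potential, and this is where strict pseudoconvexity and the positivity hypothesis are genuinely used.

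Finally, uniqueness and invariance. Given two complete K\"{a}hler--Einstein metrics $\omega_{1}, \omega_{2}$ with the same constant $-(n+2)$, I would write $\omega_{2} = \omega_{1} + \sqrt{-1}\,\partial\overline{\partial}w$ and compare their Monge--Amp\`{e}re equations; since each metric is complete with Ricci curvature bounded below, the Omori--Yau maximum principle applies, and the definite sign of the zeroth-order term forces $w$ to be constant, whence the equation gives $\omega_{1} = \omega_{2}$. (Equivalently, Yau's Schwarz lemma shows the identity map is distance-nonincreasing in both directions, hence an isometry.) Biholomorphic invariance is then immediate: if $F\colon \Omega \to \Omega'$ is a biholomorphism, then $F^{*}\omega_{+}'$ is again a complete K\"{a}hler--Einstein metric on $\Omega$ with constant $-(n+2)$, so uniqueness yields $F^{*}\omega_{+}' = \omega_{+}$; taking $\Omega' = \Omega$ shows that $\omega_{+}$ is preserved by every automorphism of $\Omega$.
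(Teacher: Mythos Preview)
The paper does not prove this theorem: it is quoted verbatim from van Coevering~\cite{vanCoevering12} and used as a black box, so there is no ``paper's own proof'' to compare against. Your outline is, however, a faithful sketch of the strategy that underlies the cited result. The necessity argument via the volume form is exactly right, and the uniqueness/invariance argument via Yau's Schwarz lemma (or the Omori--Yau maximum principle applied to the difference of potentials) is the standard one.

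For the existence direction your plan is correct in spirit but remains a plan rather than a proof. Two points deserve flagging. First, writing $\omega_{+} = (n+2)^{-1}\omega_{0} + \sqrt{-1}\,\partial\overline{\partial}u$ globally presupposes that the difference of the two K\"{a}hler forms is $\partial\overline{\partial}$-exact; on a noncompact $\Omega$ this is not automatic, and in practice one instead works with the logarithm of a Fefferman-type defining function as the background potential near $\partial\Omega$ and patches it against the positively curved metric in the interior. Second, the phrase ``run the continuity method with Cheng--Yau barriers'' hides essentially all of the work: the $C^{2}$ estimate near the strictly pseudoconvex boundary, the control of the asymptotic profile $u + \log(-\rho)$, and the passage from local model solutions to a global one are precisely the content of the cited theorems. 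If you intend to supply an independent proof rather than a citation, those estimates must be written out; as it stands, your sufficiency paragraph is an accurate roadmap but not a proof.
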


For $X = \mathbb{C}^{n+1}$,
this metric has a global K\"{a}hler potential of the form $- \log (- \rho)$,
where $\rho$ is a defining function of $\Omega$.
This fact does not hold
if, for example, $\Omega$ contains a compact analytic set with positive dimension.
However,
there exists such a K\"{a}hler potential near the boundary
under an assumption for $K_{\Omega}$.

\begin{lemma} \label{lem:existence-of-potential}
	Assume that $\Omega$ has a complete K\"{a}hler-Einstein metric $\omega_{+}$
	with Einstein constant $- (n + 2)$.
	There exists a defining function $\rho$ of $\Omega$
	such that $ \omega_{+} + d d^{c} \log (- \rho) $ has compact support in $\Omega$
	if and only if $K_{X}$ has a Hermitian metric $h_{X}$
	that is flat on the pseudoconvex side near the boundary.
\end{lemma}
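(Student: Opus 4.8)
The plan is to use the K\"ahler--Einstein equation to convert the existence of the desired defining function $\rho$ into a statement about the curvature of $K_X$ near the boundary. Recall that $\omega_+$ being K\"ahler--Einstein with Einstein constant $-(n+2)$ means its Ricci form equals $-(n+2)\omega_+$, or equivalently $\omega_+$ is a solution of the complex Monge--Amp\`ere equation; in terms of a local potential $u$ with $\omega_+ = dd^c u$, this says $e^{(n+2)u}\det(u_{i\bar\jmath})$ is locally the squared norm of a holomorphic volume form, i.e. the volume form $\omega_+^{n+1}/(n+1)!$ defines a Hermitian metric on $K_\Omega$ with curvature $-(n+2)\omega_+ < 0$, hence the dual metric on $K_X^{-1}$ (restricted to $\Omega$) has positive curvature.

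First I would record the local equivalence: a defining function $\rho$ satisfies $\omega_+ = -dd^c\log(-\rho)$ near $\partial\Omega$ if and only if, writing $u = -\log(-\rho)$, the function $u$ is a local K\"ahler potential for $\omega_+$ there. By the Monge--Amp\`ere characterization above, $-dd^c\log(-\rho)$ is K\"ahler--Einstein with constant $-(n+2)$ exactly when $(-\rho)^{-(n+2)}\,\Phi$ is (locally) $|\,\cdot\,|^2$ of a nonvanishing holomorphic section of $K_X^{n+2}$, where $\Phi$ is the local volume form $dz\wedge d\bar z$; equivalently, the volume form $\omega_+^{n+1}$ agrees near the boundary with $(n+1)!\,(-\rho)^{-(n+2)}$ times a flat Hermitian metric on $K_X^{-(n+2)}$. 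So the obstruction to finding $\rho$ is precisely a flat Hermitian metric on $K_X$ near $\partial\Omega$ on the pseudoconvex side.

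For the ``if'' direction, I would start from the given flat Hermitian metric $h_X$ on $K_X$ near the boundary, take its $(n+2)$-nd power to get a flat metric on $K_X^{n+2}$, and use the fundamental solution of the complex Monge--Amp\`ere equation (Cheng--Yau / Fefferman-type construction): the unique complete K\"ahler--Einstein metric $\omega_+$ has volume form comparable to $(-\rho_0)^{-(n+2)}$ for any initial smooth defining function $\rho_0$, and the Monge--Amp\`ere equation forces $\omega_+^{n+1} = (n+1)!\,e^{(n+2)\varphi}(-\rho_0)^{-(n+2)}\Phi_{h_X}$ near the boundary for some smooth $\varphi$ up to the boundary, where $\Phi_{h_X}$ is the (locally constant) volume form attached to $h_X$. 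Setting $\rho = e^{-\varphi}\rho_0$ gives a new defining function with $\omega_+^{n+1} = (n+1)!\,(-\rho)^{-(n+2)}\Phi_{h_X}$ near $\partial\Omega$; taking $dd^c\log$ of this identity and using that $\Phi_{h_X}$ is flat shows $\Ric_{\omega_+} = (n+2)\,dd^c\log(-\rho)$ near the boundary, hence $\omega_+ = -dd^c\log(-\rho)$ there, i.e. $\omega_+ + dd^c\log(-\rho)$ has compact support. For the ``only if'' direction I would reverse this: given such a $\rho$, the identity $\omega_+ = -dd^c\log(-\rho)$ near $\partial\Omega$ together with the Einstein condition forces $\omega_+^{n+1}/(-\rho)^{-(n+2)}$ to be a flat volume form near the boundary, which is exactly the data of a Hermitian metric on $K_X$ flat on the pseudoconvex side near $\partial\Omega$.

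The main obstacle is making the ``if'' direction precise: one must invoke the boundary regularity of the Cheng--Yau solution (the potential is $C^\infty$ up to the boundary modulo a log term, by Lee--Melrose), and carefully track that the correction function $\varphi$ appearing in the Monge--Amp\`ere equation is smooth up to $\partial\Omega$ rather than merely in the interior, so that $\rho = e^{-\varphi}\rho_0$ is a genuine defining function. Everything else is bookkeeping with the Ricci form and the flatness of $h_X$.
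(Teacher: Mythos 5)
Your argument is correct in substance, but it takes a genuinely different route from the paper. The paper settles both directions at once by quoting the proof of van Coevering's Theorem 3.1: for \emph{any} Hermitian metric $h_{X}'$ of $K_{X}$ there is a defining function $\rho'$ with $\omega_{+} + d d^{c} \log(-\rho') = \sqrt{-1}(n+2)^{-1}\Theta_{h_{X}'}$ on $\Omega$. If $h_{X}'$ is flat on the pseudoconvex side near $\partial\Omega$, this $\rho'$ already works; conversely, given a $\rho$ as in the statement, subtracting the two potential identities gives $\sqrt{-1}\Theta_{h_{X}'} = (n+2)\, d d^{c}\log(\rho'/\rho)$ near the boundary, so rescaling $h_{X}'$ by a power of (an extension of) $\rho'/\rho$ yields a flat-near-boundary metric. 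You instead rederive this from the Monge--Amp\`ere formalism: the volume form of $\omega_{+}$ induces a metric on $K_{\Omega}$ with curvature $(n+2)\omega_{+}$, and comparing it with $(-\rho)^{-(n+2)}$ and the flat volume form attached to $h_{X}$ turns the Einstein equation into the potential identity. Your route is more self-contained, at the cost of importing the Cheng--Yau boundary asymptotics explicitly, which the citation to van Coevering packages away.

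Two points need tightening. First, in the ``only if'' direction your construction a priori produces a metric only of $K_{\Omega}$, while the lemma asks for a metric of $K_{X}$; you should note that near $\partial\Omega$ one has $(-\rho)^{n+2}\omega_{+}^{n+1} = (-\rho)(d d^{c}\rho)^{n+1} + (n+1)(d d^{c}\rho)^{n}\wedge d\rho\wedge d^{c}\rho$, which is smooth and positive up to the boundary, so the weight of your flat metric is smooth up to $\partial\Omega$ and can then be extended smoothly across the boundary (for instance by comparing with an arbitrary background metric of $K_{X}$ and extending the conformal factor); flatness on the inside collar is unaffected by the extension. Second, in the ``if'' direction the correction $\varphi$ is in general \emph{not} smooth up to the boundary: the Lee--Melrose/van Coevering expansions contain terms of the form $(-\rho_{0})^{n+2}\log(-\rho_{0})$. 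Fortunately full smoothness is not what you need: once $\varphi$ is \emph{defined} by the volume-ratio identity, the conclusion $\omega_{+} = -d d^{c}\log(-\rho)$ holds pointwise wherever $\Theta_{h_{X}} = 0$, purely from flatness and the Einstein equation, with no boundary regularity at all; the asymptotics are only needed to ensure that $\varphi$ is bounded with enough regularity for $\rho = e^{-\varphi}\rho_{0}$ to qualify as a defining function, and that much is exactly what Cheng--Yau/van Coevering provide.
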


\begin{proof}
	Let $h_{X}'$ be a Hermitian metric of $K_{X}$.
	From the proof of~\cite[Theorem 3.1]{vanCoevering12},
	there exists a defining function $\rho'$ of $\Omega$
	such that
	\begin{equation}
		\omega_{+} + d d^{c} \log (- \rho') = \sqrt{-1} (n+2)^{-1} \Theta_{h_{X}'}.
	\end{equation}
	This expression proves the equivalence.
\end{proof}

Note that this condition is equivalent to
the existence of a pseudo-Einstein contact form on the boundary%
~\cite[Proposition 2.6]{Hirachi-Marugame-Matsumoto17}.
In particular,
the compactly supported $(1, 1)$-form $(\sqrt{-1} / 2 \pi) \Theta_{h_{X}}$ defines a cohomology class
$\widetilde{c}_{1} (K_{\Omega})$ in $H^{2}_{c}(\Omega; \mathbb{R})$,
which is a lift of the first Chern class $c_{1}(K_{\Omega}) \in H^{2}(\Omega; \mathbb{R})$.
Note that the cohomology class
$\widetilde{c}_{1}(K_{\Omega})^{n+1} \in H^{2n+2}_{c}(\Omega; \mathbb{R})$
is independent of the choice of $h_{X}$.
If $X = \mathbb{C}^{n+1}$,
then $\widetilde{c}_{1}(K_{\Omega}) = 0$ in $H^{2}_{c}(\Omega; \mathbb{R})$
since $X$ has a flat K\"{a}hler metric.
However,
as in the statement of \cref{thm:computation-of-Chern-numbers},
$\int_{\Omega} \widetilde{c}_{1}(K_{\Omega})^{n+1}$ may take a negative value.


Before the end of this section,
we give a lemma for the normal bundle of the maximal compact analytic set.

\begin{lemma} \label{lem:normal-bundle-of-exceptional-set}
	Let $D$ be a smooth complex hypersurface in $\Omega$
	that is a connected component of the maximal compact analytic set of $\Omega$.
	Then the normal bundle $N_{D / \Omega}$ cannot be positive.
\end{lemma}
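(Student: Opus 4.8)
The plan is to argue by contradiction: suppose $N_{D/\Omega}$ is positive, and derive that $D$ cannot be (a component of) the maximal compact analytic set. Since $D$ is a compact complex hypersurface sitting inside $\Omega$ with positive normal bundle, a theorem of Grauert tells us that $D$ can be blown down: there is a proper modification $\pi \colon \Omega \to \Omega'$ contracting $D$ to a point $p$, with $\Omega' \setminus \{p\}$ biholomorphic to $\Omega \setminus D$. Wait—Grauert's criterion requires the normal bundle to be \emph{negative} for contractibility, so positivity of $N_{D/\Omega}$ should instead be an obstruction: a neighborhood of $D$ with positive normal bundle looks like a neighborhood of the zero section in a negative line bundle's dual, i.e. it is (after shrinking) a strictly pseudoconcave collar. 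The point is that a compact hypersurface with positive normal bundle has arbitrarily small \emph{pseudoconcave} neighborhoods, which is incompatible with $\Omega$ being strictly pseudoconvex and $D$ being \emph{maximal} among compact analytic subsets.

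More concretely, I would proceed as follows. First, use the adjunction-type local model: a tubular neighborhood $U$ of $D$ in $\Omega$ is biholomorphic to a neighborhood of the zero section of $N_{D/\Omega}$, and if $N_{D/\Omega}$ is positive one can choose a Hermitian metric on $N_{D/\Omega}$ whose associated norm function $\phi$ is strictly plurisubharmonic away from $D$, so the sublevel sets $\{\phi < c\}$ are strictly pseudoconvex \emph{from the outside}, equivalently $\{\phi > c\}$ is strictly pseudoconcave near $D$. Second, invoke that on a strictly pseudoconcave manifold every global holomorphic function is constant on connected components near the concave end (a consequence of the maximum principle / Hartogs-type extension), and more to the point: by the Andreotti--Grauert theory, a strictly pseudoconcave neighborhood of $D$ forces $D$ to be \emph{movable}—there exist nearby compact hypersurfaces $D_t$ homologous to $D$, and their union sweeps out an open set, contradicting that $D$ is a component of the \emph{maximal} compact analytic set (which must then contain this larger swept-out analytic family, or else $D$ is not isolated as claimed). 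Alternatively, and perhaps cleaner: a positive normal bundle means $c_1(N_{D/\Omega})^n > 0$ when paired against $[D]$, while \cref{lem:existence-of-potential} and the strict pseudoconvexity of $\Omega$ force a sign constraint via the Hermitian metric $h_\Omega$ on $K_\Omega$; one pushes $h_\Omega$ to $D$ and compares curvatures through the adjunction formula $K_D = (K_\Omega \otimes N_{D/\Omega})|_D$, deducing that positivity of $N_{D/\Omega}$ would make the total space near $D$ admit a strictly plurisubharmonic exhaustion in the concave direction, which is absurd.

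The main obstacle is making the passage from ``$N_{D/\Omega}$ positive'' to ``$D$ is not maximal'' fully rigorous. The cleanest route is via Grauert's theorem on exceptional sets: $D$ with negative normal bundle is exactly the contractible case, so positivity of $N_{D/\Omega}$ should be played off against the fact that $\Omega$ \emph{is} a domain carrying a strictly plurisubharmonic exhaustion near $\partial\Omega$ (from strict pseudoconvexity) while near $D$ a positive normal bundle produces the opposite convexity; gluing these is impossible on a connected $\Omega$ unless the concave region is empty, i.e. $\dim D < \dim \Omega$ forces $N_{D/\Omega}$ non-positive. I expect the write-up to reduce to: (i) the local model of a neighborhood of $D$ as a disk bundle in $N_{D/\Omega}$; (ii) the observation that positivity of $N_{D/\Omega}$ yields a strictly pseudoconcave boundary $\{\phi = c\}$ of such a neighborhood; (iii) a Hartogs / Andreotti--Grauert extension argument showing any holomorphic or plurisubharmonic data extends across, contradicting the existence of the Hermitian metric $h_\Omega$ with positive curvature on $K_\Omega$ guaranteed by \cref{thm:existence-of-KE-metric}, or contradicting maximality of the analytic set directly.
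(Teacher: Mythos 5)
Your first half is on target and matches the paper's starting point: positivity of $N_{D/\Omega}$ yields (as in \cite[Satz 1]{Schneider73}, via a metric on the line bundle $[D]$ whose curvature is positive near $D$) a strictly plurisubharmonic function $\psi$ on a punctured neighborhood $U\setminus D$ with $\psi\to+\infty$ along $D$, i.e.\ arbitrarily small pseudoconcave collars around $D$. But note two problems with how you set this up: a tubular neighborhood of $D$ in $\Omega$ is in general \emph{not} biholomorphic to a neighborhood of the zero section of $N_{D/\Omega}$ (the formal principle fails; the paper itself is careful to say only ``diffeomorphic'' in Section 3), so you should construct $\psi$ from a Hermitian metric on $[D]$ rather than from a linear model; and, more importantly, a pseudoconcave collar around a compact hypersurface is not by itself absurd (think of $\mathbb{P}^{n}\subset\mathbb{P}^{n+1}$), so the contradiction cannot come from a vague ``convex and concave regions cannot be glued'' principle.

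The genuine gap is the second step: you never convert the pseudoconcave collar into a contradiction by actually using that $D$ is a component of the \emph{maximal} compact analytic set of a strictly pseudoconvex (hence holomorphically convex) domain. The paper does this by invoking the Remmert reduction: $D$ is exceptional, so there is a proper holomorphic map $\phi\colon\Omega\to\Omega'$ contracting $D$ to a normal point $q$ and biholomorphic off $D$; pushing $\psi$ forward gives a plurisubharmonic function on a punctured neighborhood of $q$ which tends to $+\infty$ at $q$, contradicting the extension theorem for plurisubharmonic functions across the point \cite[Satz 4]{Grauert-Remmert56} (an extension is upper semicontinuous with values in $[-\infty,\infty)$, hence locally bounded above). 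Your substitutes for this step do not work as stated: the Andreotti--Grauert ``movability'' claim (positive normal bundle forces $D$ to move in a family sweeping out an open set) is not a theorem in this generality, since deformations of $D$ can be obstructed by $H^{1}(D,N_{D/\Omega})\neq 0$; and the asserted contradiction with the positively curved metric on $K_{\Omega}$ from \cref{thm:existence-of-KE-metric} is never derived (and the lemma is stated without any K\"ahler--Einstein hypothesis). To repair the proof you need the blow-down of $D$ plus a removable-singularity statement for plurisubharmonic functions, or some equally concrete use of holomorphic convexity and maximality.
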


\begin{proof}
	Suppose that $N_{D / \Omega}$ is positive.
	Then as in the proof of \cite[Satz 1]{Schneider73},
	there exist a small neighborhood $U$ of $D$
	and a smooth strictly plurisubharmonic function $\psi$ on $U \setminus D$
	such that $\psi^{-1}((c, \infty)) \cup D$
	is an open neighborhood of $D$ for any $c \in \mathbb{R}$.
	On the other hand,
	there exists an analytic space $\Omega'$
	and a proper surjective holomorphic map $\phi \colon \Omega \to \Omega'$
	such that
	\begin{enumerate}
		\item $\phi$ maps $D$ to a single point $q \in \Omega'$;
		\item $\phi \colon \Omega \setminus D \to \Omega' \setminus \{ q \}$ is a biholomorphism;
		\item $\phi_{*} (\mathcal{O}_{\Omega}) = \mathcal{O}_{\Omega'}$.
	\end{enumerate}
	Let $U' = \phi(U)$ be an open set in $\Omega'$.
	Then $\psi' = \psi \circ \phi^{-1}$ is a plurisubharmonic function on $U' \setminus \{q\}$
	and $\psi'(q')$ goes to $\infty$ as $q' \to q$.
	However,
	from~\cite[Satz 4]{Grauert-Remmert56},
	$\psi'$ extends to a plurisubharmonic function on $U'$;
	this is a contradiction.
\end{proof}

\section{Proof of \texorpdfstring{\cref{thm:computation-of-Chern-numbers}}{Theorem 1.2}}

We first localize the integral $\int_{\Omega} \widetilde{c}_{1}(K_{\Omega})^{n+1}$ along $D$.
From \cref{assumption:flat-Hermitian-metric},
there exists a Hermitian metric $h$ of the normal bundle $N_{D / \Omega}$
such that the domain
\begin{equation}
	V = \{ v \in N_{D / \Omega} \mid h (v, v) < 1 \}
\end{equation}
is diffeomorphic (not necessarily biholomorphic)
to a tubular neighborhood of $D$ containing the support of the curvature of $h_{\Omega}$;
in the following
we identify the zero section of $N_{D / \Omega}$ with $D$.
Then
\begin{equation}
		\int_{\Omega} \widetilde{c}_{1} (K_{\Omega})^{n+1}
		= \int_{\Omega} \left(\frac{\sqrt{-1}}{2 \pi} \Theta_{h_{\Omega}} \right)^{n+1}
		= \int_{V} \Pi^{n+1},
\end{equation}
where $\Pi$ is the $2$-form on $V$
corresponding to $(\sqrt{-1} / 2 \pi) \Theta_{h_{\Omega}}$.
Since $K_{\Omega}|_{D}$ is isomorphic to $K_{V}|_{D}$,
the $(1, 1)$-form $\omega = \Pi|_{D}$
is a representative of $c_{1}(K_{\Omega} |_{D}) = c_{1}(K_{V}|_{D})$.
If $\Omega$ has a K\"{a}hler-Einstein metric with negative Einstein constant,
then the line bundle $K_{\Omega} |_{D} \cong K_{V}|_{D}$ is positive.

\begin{proof}[Proof of \cref{thm:computation-of-Chern-numbers}]
	Since $N_{D / \Omega}$ is a complex line bundle,
	it has a canonical $S^{1}$-action;
	its generator is denoted by $\xi$.
	Without loss of generality,
	we may assume that $\Pi$ is $S^{1}$-invariant.
	Then there exists an $S^{1}$-invariant $1$-form $\mu$ on $N_{D / \Omega}$
	such that
	\begin{equation}
		\Pi - p^{*} \omega = - d \mu,
	\end{equation}
	where $p \colon N_{D / \Omega} \to D$ is the projection.
	This is because the zero section is an $S^{1}$-equivariant deformation retract of $N_{D / \Omega}$.
	The boundary $S$ of $V$ is a principal $S^{1}$-bundle over $D$,
	and its Chern class is equal to $c_{1}(N_{D / \Omega})$.
	Near $S$, $p^{*} \omega = d \mu$ holds,
	and so
	\begin{align}
		d (\mu (\xi))
		&= \mathcal{L}_{\xi} \mu - \iota_{\xi} d \mu \\
		&= \mathcal{L}_{\xi} \mu - \iota_{\xi} (p^{*} \omega) \\
		&= 0.
	\end{align}
	Hence $\mu (\xi)$ is a constant $\alpha \in \mathbb{R}$ near $S$.
	First assume that $\alpha = 0$.
	Then there exists a $1$-form $\nu$ on $D$
	such that $\mu |_{S}= p_{S}^{*} \nu$,
	where $p_{S} \colon S \to Y$ is the canonical projection,
	and $\omega = d \nu$ holds since $p_{S}^{*} \omega = p_{S}^{*} (d \nu)$.
	Thus $\Pi = d (p^{*} \nu - \mu)$ and
	\begin{align}
		\int_{L} \Pi^{n+1}
		&= \int_{V} d (p^{*} \nu - \mu) \wedge \Pi^{n} \\
		&= \int_{S} (p^{*} \nu - \mu)|_{S} \wedge (\Pi |_{S})^{n} \\
		&= 0.
	\end{align}
	Note that $c_{1}(K_{V} |_{D}) = 0$
	and
	\begin{equation}
		c_{1}(K_{D}) = c_{1}(K_{V}|_{D}) + c_{1}(N_{D / \Omega}) = c_{1}(N_{D / \Omega})
	\end{equation}
	in $H^{2}(Y; \mathbb{R})$
	since $\omega = d \nu \in c_{1}(K_{V} |_{D})$.
	Next,
	consider the case $\alpha \neq 0$.
	Then $\alpha^{-1} \mu|_{S}$ is a connection $1$-form
	for the principal $S^{1}$-bundle $p_{S} \colon S \to D$.
	In particular,
	\begin{equation}
		c_{1} (N_{D / \Omega})
		= \left[- \frac{1}{2 \pi} d (\alpha^{-1} \mu|_{S}) \right]
		= \left[- \frac{1}{2 \pi \alpha} \omega \right].
	\end{equation}
	Therefore,
	\begin{align}
		\int_{V} \Pi^{n+1}
		&= \int_{V} (\Pi^{n+1} - (p^{*} \omega)^{n+1}) \\
		&= \int_{V} (\Pi - p^{*} \omega) \wedge (\Pi^{n}
		+ \dots + (p^{*} \omega)^{n}) \\
		&= - \int_{V} d[\mu \wedge (\Pi^{n} + \dots + (p^{*} \omega)^{n})] \\
		&= - \int_{S} \mu \wedge (p_{S}^{*} \omega)^{n} \\
		&= (- 2 \pi \alpha)^{n+1} \int_{D} c_{1} (N_{D / \Omega})^{n}.
	\end{align}
	In the last equality,
	we use the integration along fibers.
	From $\omega \in c_{1}(K_{V} |_{D})$,
	$c_{1}(K_{V} |_{D}) = (- 2 \pi \alpha) c_{1}(N_{D / \Omega})$
	and
	\begin{equation}
		c_{1}(K_{D}) = c_{1}(K_{V} |_{D}) + c_{1}(N_{D / \Omega}) = (1 - 2\pi \alpha) c_{1}(N_{D / \Omega}).
	\end{equation}
	This proves the first statement.
	
	If $K_{V} |_{D}$ is positive,
	then $\alpha$ is non-zero since $c_{1}(K_{V} |_{D}) \neq 0$.
	Hence $c_{1}(N_{D / \Omega}) = (- 2 \pi \alpha)^{-1} c_{1}(K_{V} |_{D})$,
	and $N_{D / \Omega}$ is either positive or negative.
	However,
	$N_{D / \Omega}$ cannot be positive from \cref{lem:normal-bundle-of-exceptional-set},
	and so $N_{D / \Omega}$ is negative.
\end{proof}

\section{Proof of \cref{thm:renormalized-volume-for-tubes}}

To prove \cref{thm:renormalized-volume-for-tubes},
we first show the following

\begin{lemma} \label{lem:flat-metric-on-tubes}
	The domain $\Omega$ in \cref{thm:renormalized-volume-for-tubes}
	satisfies \cref{assumption:flat-Hermitian-metric}.
\end{lemma}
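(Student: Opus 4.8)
The plan is to exhibit explicitly the maximal compact analytic set of $\Omega$ and a Hermitian metric on $K_\Omega$ with curvature supported near it. First I would identify the zero section $D = \{v = 0\}$ of the line bundle $L^{-1}$, which sits inside $\Omega = \{h_{L^{-1}}(v,v) < 1\}$ as a copy of $Y$. I claim $D$ is the maximal compact analytic set: any compact analytic subset $Z \subseteq \Omega$ must project to a compact analytic subset of $Y$ under $p\colon L^{-1} \to Y$, and since $L$ is ample (hence $L^{-1}$ is negative as a line bundle over compact subvarieties), a compact analytic subset of the total space of $L^{-1}$ that surjects onto a positive-dimensional subvariety of $Y$ would force $L^{-1}$ restricted there to have a nontrivial section bounded in the fiber norm — which cannot happen because $h_{L^{-1}}(v,v)$ is strictly plurisubharmonic away from the zero section along fibers. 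The clean way to phrase this is: the only bounded holomorphic functions fiberwise are constants, so $Z$ must lie in finitely many fibers, and a compact analytic subset of $\mathbb{C}$ is a point; combined with properness this gives $Z \subseteq D$, and $D$ itself is obviously compact and analytic. So $D \cong Y$ is a smooth complex hypersurface, as required.

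Next I would produce the Hermitian metric on $K_\Omega$. Since $X = L^{-1}$ is the total space of a line bundle over $Y$, there is the adjunction-type isomorphism $K_X \cong p^*(K_Y \otimes L)$ (the relative canonical bundle of a line bundle total space is $p^*L^{-1}$, since the fibers are affine lines trivialized up to the $L^{-1}$-twist... I should state it as $K_{L^{-1}} \cong p^*(K_Y \otimes L^{-1})^{\vee}$, i.e. $K_X \cong p^* K_Y \otimes p^* L$ — I will verify the exact twist during write-up). The metric $h_L$ has positive curvature $\omega$, which is a Kähler–Einstein metric, so $K_Y$ carries the induced metric with curvature $-\beta\omega$. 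Pulling back, $p^*(K_Y \otimes L)$ gets a metric with curvature $p^*((1-\beta)\omega)$, which is positive when $\beta < 1$ — consistent with Theorem 2.1. But this pulled-back metric has curvature supported everywhere, not near $D$. So instead I would take a cutoff: let $\chi$ be a smooth function of the fiber norm $t = h_{L^{-1}}(v,v)$ that is identically $1$ near $t=0$ and identically $0$ for $t$ near $1$, and modify the metric by $e^{\varphi}$ where $\varphi$ is built so that the curvature form equals the pulled-back form only inside the tube and vanishes outside. Concretely, I would write the given global metric $h_X'$ on $K_X$ in a fiber coordinate, compare it to the flat metric $h_X$ coming from the trivialization of $K_{L^{-1}}$ over a neighborhood of $D$ in fiber coordinates, and interpolate; the curvature of the interpolated metric is then $dd^c$ of a function supported in an annular region, hence in a tubular neighborhood of $D$. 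This is exactly the hypothesis of Assumption 3.1.

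The main obstacle I expect is making the cutoff construction genuinely produce a \emph{Hermitian metric on $K_\Omega$} (a positive-definite smooth metric, not just a closed current) whose curvature has the stated support, while simultaneously keeping track of the correct twist in the adjunction isomorphism so that the constant $\beta$ appearing in Assumption 3.1 via Theorem 3.2 matches the Einstein constant $\beta$ in the statement of Theorem 1.1. In other words, the bookkeeping step — verifying that $c_1(K_D) = \beta\cdot c_1(N_{D/\Omega})$ with \emph{this} $\beta$ — is where I would be most careful: here $N_{D/\Omega} = N_{D/L^{-1}} \cong L^{-1}|_D = L^{-1}$ (as a bundle over $Y = D$), so $c_1(N_{D/\Omega}) = -c_1(L)$, while $K_D = K_Y$ has $c_1(K_Y) = -\beta\, c_1(L)$ by the Kähler–Einstein condition, giving $c_1(K_D) = \beta\, c_1(N_{D/\Omega})$, consistent. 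The rest — that the support can be arranged inside a tubular neighborhood — is a routine partition-of-unity argument once the two metrics are written in the same fiber coordinate.
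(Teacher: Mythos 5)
Your proposal follows essentially the same route as the paper: identify the zero section $D \cong Y$ as the maximal compact analytic set, use the adjunction isomorphism $K_X \cong p^*(K_Y \otimes L)$ for the total space $X$ of $L^{-1}$, observe that the metric induced from $h_Y$ and $h_L$ has curvature $\sqrt{-1}(1-\beta)\,p^*\Theta_{h_L}$, and then modify this metric by a conformal factor with a cutoff in the fiber norm so that the curvature becomes supported near $D$ (the paper simply takes all of $\Omega$ as the tubular neighborhood and asks for compact support). Your bookkeeping check $c_1(K_D)=\beta\,c_1(N_{D/\Omega})$ with $N_{D/\Omega}\cong L^{-1}$ is also correct and consistent with Theorem 1.3.

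One sub-step as you describe it would not literally work: there is in general no ``flat metric $h_X$ coming from the trivialization of $K_{L^{-1}}$ over a neighborhood of $D$,'' because any neighborhood of $D$ retracts onto $D$ and $K_X\cong p^*(K_Y\otimes L)$ restricted there has $c_1 = (1-\beta)\,p^*c_1(L)\neq 0$ when $\beta<1$ and $n\geq 1$, so $K_X$ is nontrivial near $D$ and there is nothing to interpolate with. The correct (and the paper's) device is that $p^*\omega$ has a global potential off the zero section, namely $dd^c \log h_{L^{-1}}(v,v) = p^*\omega$ on $\Omega\setminus D$; multiplying $h'_X$ by $e^{\mp(1-\beta)\chi(t)\log t}$, with $t=h_{L^{-1}}(v,v)$ and $\chi$ a cutoff equal to $1$ near $t=1$ and $0$ near $t=0$, kills the curvature near $\partial\Omega$ and yields the desired metric. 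Also, the obstacle you flag about obtaining a genuine Hermitian metric is not an issue: rescaling a Hermitian metric on a line bundle by $e^{\varphi}$ for smooth real $\varphi$ always produces a Hermitian metric, and Assumption 1.4 imposes no positivity on its curvature. Finally, your argument that $D$ is the maximal compact analytic set is vaguer than needed; the clean statement is that $t$ is strictly plurisubharmonic on $\Omega\setminus D$ (since $L$ is positive), so no positive-dimensional compact analytic set can meet $\Omega\setminus D$ -- though the paper simply quotes this fact as known.
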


\begin{proof}
	Denote by $X$ the total space of $L^{-1}$.
	It is known that the maximal compact analytic set of $\Omega$
	is the zero section of $L^{-1}$.
	Choose the whole $\Omega$ as a tubular neighborhood of the zero section.
	It is enough to prove the existence of a Hermitian metric of $K_{X}$
	such that its curvature has compact support in $\Omega$.
	Let $h_{Y}$ be the Hermitian metric of $K_{Y}$
	induced from the K\"{a}hler metric $\omega$.
	Then,
	$\sqrt{-1} \Theta_{h_{Y}} = - \beta \cdot \omega$ holds.
	Since $K_{X}$ is isomorphic to $p^{*} K_{Y} \otimes p^{*} L$,
	where $p \colon X \to Y$ is the projection,
	it has the Hermitian metric $h'_{X}$ induced from $h_{Y}$ and $h_{L}$,
	whose curvature $\sqrt{-1} \Theta_{h'_{X}}$ is equal to
	\begin{equation}
		\sqrt{-1} \Theta_{h'_{X}}
		= \sqrt{-1}(p^{*} \Theta_{h_{Y}} + p^{*} \Theta_{h_{L}})
		= \sqrt{-1}(1 - \beta) p^{*} \Theta_{h_{L}}.
	\end{equation}
	Since $\sqrt{-1} p^{*} \Theta_{h_{L}}$ has a K\"{a}hler potential outside the zero section,
	we obtain a desired Hermitian metric of $K_{X}$
	by modifying $h'_{X}$.
\end{proof}

\begin{proof}[Proof of \cref{thm:renormalized-volume-for-tubes}]
	First,
	consider the existence of a K\"{a}hler-Einstein metric on $\Omega$.
	Van Coevering~\cite[Corollary 5.6]{vanCoevering12} has proved that
	$\Omega$ has a complete K\"{a}hler-Einstein metric
	with Einstein constant $- (n + 2)$
	if and only if $c_{1}(K_{Y}) + c_{1}(L) > 0$.
	Hence if $\beta < 1$,
	such a metric $\omega_{+}$ exists.
	Moreover,
	from the proof of \cref{lem:flat-metric-on-tubes},
	$K_{X}$ has a Hermitian metric whose curvature
	has compact support in $\Omega$.
	Thus the renormalized volume of $(\Omega, \omega_{+})$
	is well-defined.
	As discussed in \cref{section:introduction},
	it is enough to compute the integral of $\widetilde{c}_{1}(K_{\Omega})^{n+1}$,
	and its formula is given by \cref{thm:computation-of-Chern-numbers}.
\end{proof}

\end{document}